\newtheorem{theorem}{Theorem}[section]
\newtheorem{lemma}[theorem]{Lemma}
\newtheorem{proposition}[theorem]{Proposition}
\newtheorem{corollary}[theorem]{Corollary}
\theoremstyle{definition}
\newtheorem{example}[theorem]{Example}
\theoremstyle{remark}
\numberwithin{equation}{section}
\begin{document}

\title { Further refinements of the Cauchy--Schwarz inequality for matrices}

\author[M. Bakherad]{Mojtaba Bakherad }

\address{ Department of Mathematics, Faculty of Mathematics, University of Sistan and Baluchestan, Zahedan, Iran.}
\email{mojtaba.bakherad@yahoo.com; bakherad@member.ams.org}

\subjclass[2010]{Primary 15A18, Secondary 15A60, 15A42, 47A60, 47A30.}

\keywords{convex function, the Cauchy-Schwarz inequality, unitarily invariant norm, numerical radius, Hadamard product.  }
\begin{abstract}
Let $A, B$ and $X$ be $n\times n$ matrices such that $A, B$ are positive semidefinite. We present some refinements of the matrix Cauchy-Schwarz
 inequality by using  some
integration techniques and various refinements of the Hermite--Hadamard
inequality. In particular, we  establish the inequality
\begin{align*}
|||\,|A^{1\over2}XB^{1\over2}|^r|||^2&\leq|||\,|A^{t}XB^{1-s}|^r||| \,\,\,|||\,|A^{1-t}XB^{s}|^r|||\\&
\leq\max \{|||\,|AX|^r||| \,\,\,|||\,|XB|^r|||,|||\,|AXB|^r||| \,\,\,|||\,|X|^r|||\},
 \end{align*}
 where $s,t\in[0,1]$ and $r\geq0$.

\end{abstract} \maketitle
\section{Introduction and preliminaries}
\noindent Let $\mathcal{M}_n$ be the $C^*$-algebra of all
$n\times n$ complex matrices. For Hermitian matrices $A, B\in
\mathcal{M}_n$, we write  $A\geq 0$ if $A$ is positive semidefinite,
$A>0$ if $A$ is positive definite, and $A\geq B$ if $A-B\geq0$. We
use $\mathcal{S}_n$ for the set of positive semidefinite matrices
and $\mathcal{P}_n$ for the set of positive definite matrices in  $\mathcal{M}_n$.
A norm $|||\,.\,|||$ is called unitarily invariant norm if
$|||UAV|||=|||A|||$ for all $A\in\mathcal{M}_n$ and  all unitary
matrices $U, V\in\mathcal{M}_n$. The numerical range of $A\in\mathcal{M}_n$ is
$W(A)=\{\langle Ax, x\rangle: x\in\mathbb{C}^n, \|x\|=1 \}$ and
the numerical radius of  $A$ is defined by
$\omega(A)=\sup\{|\langle Ax, x\rangle|: x\in\mathbb{C}^n, \|x\|=1 \}$.
 It is well-known \cite{gus} that $\omega(\,\cdot\,)$ is a weakly unitarily invariant   norm on $\mathcal{M}_n$, that is $\omega(U^*AU)=\omega(A)$ for every unitary $U\in\mathcal{M}_n$. The  Hadamard product (Schur product) of two matrices $A, B\in\mathcal{M}_n$  is the
matrix $A\circ B$ whose $(i, j)$ entry is $a_{ij}b_{ij}\,\,(1\leq i,j \leq n)$.
 The Schur multiplier  operator $S_A$ on  $\mathcal{M}_n$ is defined by $S_A=A\circ X\,\,(X\in\mathcal{M}_n)$.
 The induced norm of $S_A$ with respect to the spectral norm  is  $\|S_A\|=\sup_{X\not=0}{\|S_A(X)\|\over \|X\|}=\sup_{X\not=0}{\|A\circ X\|\over \|X\|},$
 and the induced norm of $S_A$ with respect to numerical radius norm will be denoted by
 \begin{align*}
\|S_A\|_\omega=\sup_{X\not=0}{\omega(S_A(X))\over \omega(X)}=\sup_{X\not=0}{\omega(A\circ X)\over \omega(X)}.
 \end{align*}

  A continuous real valued function $f$ on an interval $J\subseteq \mathbb{R}$ is called operator monotone  if $A\leq B$ implies $f(A)\leq f(B)$  for all $A, B\in\mathcal{M}_n$ with spectra in $J$.
Recall that a  real valued function $F$ defined on $J_1\times J_2$ is called convex if
 \begin{align*}
F(\lambda x_1+(1-\lambda )x_2,\lambda y_1+(1-\lambda )y_2)\leq \lambda F(x_1,y_1)+(1-\lambda)F(x_2,y_2)
 \end{align*}
 for all $x_1,x_2\in J_1, y_1,y_2\in J_2$ and $\lambda\in[0,1]$. \\
  For two  sequences $a=(a_1, a_2, \cdots,a_n)$ and $b=(b_1, b_2,\cdots, b_n)$ of real numbers, the classical Cauchy-Schwarz inequality  states that
 \begin{align*}
\left(\sum_{j=1}^n a_jb_j  \right)^2\leq\left(\sum_{j=1}^n a_j^2 \right)\left(\sum_{j=1}^n b_j^2  \right)
 \end{align*}
 with equality if and only if the sequences $a$  and $b$ are proportional \cite{pec}.
Horn and Mathias \cite{horn1} gave a   matrix Cauchy-Schwarz inequality as follows
 \begin{align*}
||| \,|A^*B|^r|||^2\leq |||\,(AA^*)^r|||\,\,\,|||\,(BB^*)^r|||\qquad(A, B, X\in\mathcal{M}_n,r\geq0).
 \end{align*}
 Bhatia and Davis \cite{davis} showed that
 \begin{align}\label{a24}
||| \,|A^*XB|^r|||^2\leq |||\,|AA^*X|^r|||\,\,\,|||\,|XBB^*|^r|||\qquad(A, B, X\in\mathcal{M}_n,r\geq0),
 \end{align}
 which is  equivalent to
 \begin{align}\label{d41}
||| \,|A^{1\over2}XB^{1\over2}|^r|||^2\leq |||\,|AX|^r|||\,\,\,|||\,|XB|^r|||\qquad(A, B\in\mathcal{S}_n, X\in\mathcal{M}_n,r\geq0).
 \end{align}
 In \cite{hiai} it is proved that the function $f(t)=|||\,|A^{t}XB^{1-t}|^r||| \,\,\,|||\,|A^{1-t}XB^{t}|^r|||$   is convex on the interval $[0,1]$, when  $A, B\in\mathcal{S}_n,$ $ X\in\mathcal{M}_n$  and attains its minimum at $t={1\over2}$. In view of the fact that the  function $f$ is decreasing on the interval $[0,{1\over2}]$ and increasing on the interval $[{1\over2},1]$.  In particular,   we have a refinement of the Cauchy-Schwarz inequality \cite{hiai} as follows
\begin{align}\label{a6}
|||\,|A^{1\over2}XB^{1\over2}\,|^r|||^2\leq|||\,|A^{\mu}XB^{1-\mu}|^r||| \,\,\,|||\,|A^{1-\mu}XB^{\mu}|^r|||\leq |||\,|AX|^r|||\,\,\,|||\,|XB|^r|||,
 \end{align}
where  $A, B\in\mathcal{S}_n,$ $ X\in\mathcal{M}_n$ and $\mu\in[0,1]$.\\
  Applying the convexity of the function $f(t)=|||\,|A^{t}XB^{1-t}|^r||| \,\,\,|||\,|A^{1-t}XB^{t}|^r|||$ $(t\in[0,1])$, we show some refinements of inequality \eqref{a6}. we also show the convexity of the function $f(s,t)=|||\,|A^{s}XB^{1-t}|^r||| \,\,\,|||\,|A^{1-s}XB^{t}|^r||| $ and present some other refinements of inequality \eqref{a6}.  In the last section we show some related numerical radius inequalities.
\section{ Norm inequality involving the Cauchy-Schwarz }
In this section, we establish some refinements of inequality \eqref{a6}. To this end, we need the following Hermite-Hadamard inequality.
\begin{lemma}\cite{feng}\label{a7}
Let g be a real-valued convex function on
$[a, b]$. Then
\begin{align*}
g\left({a+b\over2}\right)\leq {1\over b-a}\int_a^b g(s)ds\leq{1\over4}\Big[g(a)+2g\Big({a+b\over2}\Big)+g(b)\Big]\leq {g(a)+g(b)\over2}.
 \end{align*}
 \end{lemma}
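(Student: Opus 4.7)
The plan is to split the chain into its three constituent inequalities and treat each one separately using the convexity of $g$. For the leftmost bound $g\big(\frac{a+b}{2}\big) \leq \frac{1}{b-a}\int_a^b g(s)\,ds$, I would substitute $s = (1-\lambda)a + \lambda b$ to rewrite the average as $\int_0^1 g((1-\lambda)a + \lambda b)\,d\lambda$, then pair the integrand at $\lambda$ and $1-\lambda$ using the identity $\frac{a+b}{2} = \frac{1}{2}\big[(1-\lambda)a + \lambda b\big] + \frac{1}{2}\big[\lambda a + (1-\lambda)b\big]$; convexity yields $g\big(\frac{a+b}{2}\big) \leq \frac{1}{2}\big[g((1-\lambda)a+\lambda b) + g(\lambda a + (1-\lambda)b)\big]$, and integrating over $\lambda\in[0,1]$ closes the step.

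The rightmost inequality $\frac{1}{4}\big[g(a) + 2g\big(\frac{a+b}{2}\big) + g(b)\big] \leq \frac{g(a)+g(b)}{2}$ is immediate from midpoint convexity: clearing the factor $\frac{1}{4}$ reduces it to $2g\big(\frac{a+b}{2}\big) \leq g(a) + g(b)$, which is convexity at $\lambda = \frac{1}{2}$.

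The central inequality $\frac{1}{b-a}\int_a^b g(s)\,ds \leq \frac{1}{4}\big[g(a) + 2g\big(\frac{a+b}{2}\big) + g(b)\big]$ is the substantive refinement and, in my view, the main obstacle. My strategy is to split the interval of integration at $m = \frac{a+b}{2}$ and apply the classical upper Hermite--Hadamard bound $\frac{1}{\beta-\alpha}\int_\alpha^\beta g(s)\,ds \leq \frac{g(\alpha)+g(\beta)}{2}$ on each of the subintervals $[a,m]$ and $[m,b]$, giving $\int_a^m g(s)\,ds \leq \frac{b-a}{4}\big[g(a) + g(m)\big]$ and $\int_m^b g(s)\,ds \leq \frac{b-a}{4}\big[g(m) + g(b)\big]$. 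Adding these and dividing by $b-a$ produces exactly $\frac{1}{4}\big[g(a) + 2g(m) + g(b)\big]$. In a self-contained treatment I would first derive the classical upper bound from the chord inequality $g(s) \leq \frac{b-s}{b-a}g(a) + \frac{s-a}{b-a}g(b)$ followed by integration over $[a,b]$, and then assemble the three pieces into the full chain.
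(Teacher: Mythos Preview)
Your argument is correct in every step: the symmetrization for the lower Hermite--Hadamard bound, the midpoint-convexity reduction for the rightmost inequality, and the bisection at $m=\tfrac{a+b}{2}$ followed by the classical upper bound on each half for the middle inequality all go through exactly as you describe. Note, however, that the paper does not supply its own proof of this lemma; it is quoted with a citation to \cite{feng} and used as a black box. So there is no ``paper's proof'' to compare against here---what you have written is the standard derivation, and it is complete.
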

  Applying Lemma \ref{a7} we have following result.
\begin{proposition}\label{a8}
Suppose that $A, B\in\mathcal{S}_n$, $ X\in \mathcal{M}_n$ and $r\geq0$. Then
\begin{align*}
|||\,|A^{1\over2}XB^{1\over2}|^r|||^2&\leq
{1\over|1-2\mu|}\left|\int_\mu^{1-\mu}
|||\,|A^{s}XB^{1-s}|^r|||  \,\,\,|||\,|A^{1-s}XB^{s}|^r|||\,ds \right|\\&\leq
{1\over2}\Big[|||\,|A^{1\over2}XB^{1\over2}|^r|||^2+|||\,|A^{\mu}XB^{1-\mu}|^r|||  \,\,\,|||\,|A^{1-\mu}XB^{\mu}|^r|||\Big]\\&\leq
|||\,|A^{\mu}XB^{1-\mu}|^r|||  \,\,\,|||\,|A^{1-\mu}XB^{\mu}|^r|||
 \end{align*}
for all $0\leq\mu\leq1$ and all unitarily invariant norms $|||\,.\,|||$.
 \end{proposition}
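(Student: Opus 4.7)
The plan is to apply the Hermite--Hadamard inequality (Lemma \ref{a7}) directly to the convex function
\[
g(s)=|||\,|A^{s}XB^{1-s}|^r|||\,\,\,|||\,|A^{1-s}XB^{s}|^r|||,
\]
whose convexity on $[0,1]$ is the result of Hiai--Zhan cited just before \eqref{a6}. The whole proof should then be a one-paragraph substitution, so the real task is to identify the correct interval and exploit the symmetry of $g$.

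First I would record the symmetry $g(1-s)=g(s)$, which is immediate from the definition by swapping the two factors, and in particular $g(\mu)=g(1-\mu)$ and $g\!\left(\tfrac12\right)=|||\,|A^{1/2}XB^{1/2}|^r|||^{2}$. Assuming without loss of generality that $\mu\le\tfrac12$ (the case $\mu>\tfrac12$ is handled by swapping the endpoints, which is exactly what the absolute value $|1-2\mu|$ and the modulus around the integral take care of), I would apply Lemma \ref{a7} to $g$ on the interval $[\mu,1-\mu]$, whose midpoint is $\tfrac12$ and whose length is $1-2\mu$. This gives
\[
g\!\left(\tfrac12\right)\le\frac{1}{1-2\mu}\int_{\mu}^{1-\mu}g(s)\,ds\le\frac{1}{4}\bigl[g(\mu)+2g(\tfrac12)+g(1-\mu)\bigr]\le\frac{g(\mu)+g(1-\mu)}{2}.
\]

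Using $g(1-\mu)=g(\mu)$, the middle right-hand side simplifies to $\tfrac12[g(\tfrac12)+g(\mu)]$ and the final right-hand side to $g(\mu)$, which are exactly the two remaining terms of the claimed chain. Substituting the explicit expressions for $g$ and inserting the absolute value on the integral to absorb the case $\mu>\tfrac12$ (where $[\mu,1-\mu]$ should be read as $[1-\mu,\mu]$ and the length is $|1-2\mu|$) yields the proposition.

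The only issue that needs a word of comment is the degenerate case $\mu=\tfrac12$, where $|1-2\mu|=0$ and the interval collapses to a point. There the inequalities to be proved all reduce to the equality $g(\tfrac12)=g(\tfrac12)$, so the statement can be read either as a limit of the Hermite--Hadamard chain as $\mu\to\tfrac12$ or simply as trivially valid. I do not anticipate any genuine obstacle beyond keeping track of the endpoints when $\mu>\tfrac12$ and invoking the symmetry of $g$ at the correct step.
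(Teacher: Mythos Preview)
Your proposal is correct and follows essentially the same approach as the paper: apply the Hermite--Hadamard lemma to the convex function $g$ on the interval $[\mu,1-\mu]$, use the symmetry $g(1-\mu)=g(\mu)$ to simplify the resulting chain, and handle the cases $\mu<\tfrac12$, $\mu>\tfrac12$, and $\mu=\tfrac12$ exactly as you describe. The only cosmetic difference is that the paper writes out the two cases separately rather than invoking the absolute value to cover both at once.
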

 \begin{proof}
  Let $f(t)=|||\,|A^{t}XB^{1-t}|^r||| \,\,\,|||\,|A^{1-t}XB^{t}|^r|||$. First assume that $0\leq\mu<{1\over2}$.  It  follows from  Lemma  \ref{a7} that
 \begin{align*}
f\left({\mu+1-\mu\over2}\right)&\leq {1\over 1-2\mu} \int_\mu^{1-\mu}f(s)ds\\&\leq{1\over4}\Big[{f(\mu)+2f\left({\mu+1-\mu\over2}\right)+f(1-\mu)}\Big]\\&\leq {f(1-\mu)+f(\mu)\over2},
 \end{align*}
 whence
 \begin{align*}
f\left({1\over2}\right)\leq {1\over 1-2\mu} \int_\mu^{1-\mu}f(s)ds\leq {1\over2}\Big[{f(\mu)+f({1\over2})}\Big]\leq {f(\mu)}.
 \end{align*}
 Hence
 \begin{align}\label{a10}
|||\,|A^{1\over2}XB^{1\over2}|^r|||^2&\leq{1\over1-2\mu}\int_\mu^{1-\mu}
|||\,|A^{1-s}XB^{s}|^r|||\,\,\,|||\,|A^{s}XB^{1-s}|^r|||\, ds \nonumber \\&\leq
{1\over2}\Big[|||\,|A^{1\over2}XB^{1\over2}|^r|||^2+|||\,|A^{\mu}XB^{1-\mu}|^r|||  \,\,\,|||\,|A^{1-\mu}XB^{\mu}|^r|||\Big]\nonumber\\&\leq|||\,|A^{\mu}XB^{1-\mu}|^r||| \,\,\,|||\,|A^{1-\mu}XB^{\mu}|^r|||.
\end{align}
Now, assume that ${1\over2}<\mu\leq1$. By the  symmetry property of  \eqref{a10} with respect to  $\mu$, if we replace $\mu$  by $1-\mu$, then
\begin{align}\label{a11}
|||\,|A^{1\over2}XB^{1\over2}|^r|||^2&\leq{1\over2\mu-1}\int_{1-\mu}^\mu
|||\,|A^{1-s}XB^{s}|^r|||\,\,\,|||\,|A^{s}XB^{1-s}|^r|||\, ds \nonumber\\&\leq
{1\over2}\Big[|||\,|A^{1\over2}XB^{1\over2}|^r|||^2+|||\,|A^{\mu}XB^{1-\mu}|^r|||  \,\,\,|||\,|A^{1-\mu}XB^{\mu}|^r|||\Big]\nonumber \\&\leq|||\,|A^{\mu}XB^{1-\mu}|^r||| \,\,\,|||\,|A^{1-\mu}XB^{\mu}|^r|||.
\end{align}
Since $\lim_{\mu\rightarrow{1\over2}}{1\over|2\mu-1|}\left|\int_\mu^{1-\mu}
|||\,|A^{s}XB^{1-s}|^r||| \,\,\, |||\,|A^{1-s}XB^{s}|^r|||\,ds \right|=|||\,|A^{1\over2}XB^{1\over2}|^r|||^2$,
 inequalities  \eqref{a10} and \eqref{a11} yield  the  desired result.
\end{proof}
 Now, we show  the convexity of the   function $$F(s,t)=|||\,|A^{1-t}XB^{1+s}|^r|||\,\,\,|||\,|A^{1+t}XB^{1-s}|^r|||$$ and  we use  the convexity of $F$ to prove some  Cauchy-Schwarz type inequalities.
\begin{theorem}\label{a5}
Suppose that $A, B\in\mathcal{S}_n$, $ X\in \mathcal{M}_n$ and $r\geq0$. Then  the function
\begin{align*}
F(s,t)=|||\,|A^{1-t}XB^{1+s}|^r|||\,\,\,|||\,|A^{1+t}XB^{1-s}|^r|||
 \end{align*}
is convex on  $[-1,1]\times[-1,1]$ and attains its minimum at $(0,0)$.
 \end{theorem}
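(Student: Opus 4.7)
The plan is to obtain the joint convexity of $F$ from a one-variable log-convexity statement along every line in $[-1,1]^{2}$, and then to read off the location of the minimum from convexity together with the symmetry $F(s,t)=F(-s,-t)$ (the two factors are interchanged under $(s,t)\mapsto(-s,-t)$).

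The key one-variable input I would use is: for any $A,B\in\mathcal{S}_{n}$, $X\in\mathcal{M}_{n}$, $r\geq0$, any unitarily invariant norm, and any real affine maps $\alpha(\lambda),\beta(\lambda)$ taking values in $[0,\infty)$, the scalar function
\[
\lambda\longmapsto|||\,|A^{\alpha(\lambda)}XB^{\beta(\lambda)}|^{r}|||
\]
is log-convex in $\lambda$. For $A,B>0$ this follows from Hadamard's three-line theorem applied to the entire matrix-valued function $z\mapsto A^{\alpha(z)}XB^{\beta(z)}$ (paired through duality with a test matrix to reduce to a scalar entire function), and the positive semidefinite case is obtained by perturbing $A\to A+\varepsilon I,\,B\to B+\varepsilon I$ and letting $\varepsilon\downarrow0$. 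Along the particular direction $\alpha+\beta=\mathrm{const}$ this log-convexity is, after the rescaling $A\to A^{\nu},\,B\to B^{\nu}$, exactly the Bhatia--Davis inequality \eqref{d41} used in \cite{hiai}; the substantive point is that it also holds in every other direction of the $(\alpha,\beta)$-plane.

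Granting this input, fix two points $(s_{1},t_{1}),(s_{2},t_{2})\in[-1,1]^{2}$, parameterize the line segment between them by $\lambda\in[0,1]$, and write
\[
F(\lambda)=|||\,|A^{1-t(\lambda)}XB^{1+s(\lambda)}|^{r}|||\,\,\,|||\,|A^{1+t(\lambda)}XB^{1-s(\lambda)}|^{r}|||.
\]
In each of the two factors the exponents of $A$ and $B$ are affine functions of $\lambda$, so by the key input each factor is log-convex in $\lambda$. Their product $F$ is therefore log-convex in $\lambda$, and in particular convex in $\lambda$. Because this holds on every line segment in $[-1,1]^{2}$, $F$ is jointly convex there.

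Finally, the identity $F(s,t)=F(-s,-t)$ combined with joint convexity gives
\[
F(0,0)=F\!\left(\tfrac{(s,t)+(-s,-t)}{2}\right)\leq\tfrac{1}{2}\bigl(F(s,t)+F(-s,-t)\bigr)=F(s,t),
\]
so $(0,0)$ is a global minimizer on $[-1,1]^{2}$. The main obstacle is the one-variable log-convexity along a general direction of the $(\alpha,\beta)$-plane; once the holomorphic three-line argument for the two-parameter family $A^{z_{1}}XB^{z_{2}}$ is available, both the convexity of $F$ and the location of its minimum follow routinely.
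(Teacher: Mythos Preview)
Your strategy coincides with the paper's at the structural level: both arguments show that each of the two factors of $F$ is (midpoint) log-convex along every line segment in $[-1,1]^{2}$, multiply to get log-convexity of $F$, and then deduce convexity; the minimum at $(0,0)$ is obtained from convexity together with the symmetry $F(s,t)=F(-s,-t)$, exactly as you spell out. Where the paper differs is in how it justifies the factor-wise log-convexity. Instead of invoking Hadamard's three-line theorem, it writes
\[
A^{1-t_{1}}XB^{1+s_{1}}=A^{t_{2}}\bigl(A^{1-t_{1}-t_{2}}XB^{1+s_{1}-s_{2}}\bigr)B^{s_{2}}
\]
and applies the Bhatia--Davis inequality~\eqref{a24} directly to this product (and similarly for the second factor), which gives the midpoint log-convexity inequality for every $r\geq0$ in one stroke; AM--GM and continuity then upgrade midpoint convexity to convexity. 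This is shorter and also sidesteps a small gap in your sketch: pairing $z\mapsto A^{\alpha(z)}XB^{\beta(z)}$ with a fixed test matrix and applying the three-line theorem yields log-convexity of $\lambda\mapsto|||A^{\alpha(\lambda)}XB^{\beta(\lambda)}|||$, i.e.\ the case $r=1$, but since $M\mapsto|M|^{r}$ is not holomorphic the duality reduction does not cover $|||\,|M|^{r}|||$ for general $r$. To handle arbitrary $r\geq0$ one needs either the log-majorization route (three-line on exterior powers, then weak majorization and a H\"older step) or simply the already-available inequality~\eqref{a24}, which is what the paper uses.
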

 \begin{proof}
 The function $F$ is continuous and $F(s,t)=F(-s,-t)\,\,(s,t\in[0,1])$. Thus it is enough to show that
 \begin{align*}
F(s_1,t_1)\leq{1\over2}[F(s_1+s_2,t_1+t_2)+F(s_1-s_2,t_1-t_2)],
 \end{align*}
 where $s_1\pm s_2,t_1\pm t_2\in[-1,1]\times[-1,1]$.\\
  Let $s_1\pm s_2,t_1\pm t_2\in[-1,1]\times[-1,1]$. Applying inequality \eqref{a24} we obtain
 \begin{align}\label{a213}
|||\,|A^{1-t_1}X&B^{1+s_1}|^r|||=|||\,|A^{t_2}\big(A^{1-t_1-t_2}XB^{1+s_1-s_2}\big)B^{s_2}|^r|||\nonumber
\\&\leq \left\{|||\,|A^{1-(t_1-t_2)}XB^{1+(s_1-s_2)}|^r|||\,\,\,|||\,|A^{1-(t_1+t_2)}XB^{1+(s_1+s_2)}|^r|||\right\}^{1/2}
 \end{align}
 and
 \begin{align}\label{a223}
|||\,|A^{1+t_1}X&B^{1-s_1}|^r|||=|||\,|A^{t_2}\big(A^{1+t_1-t_2}XB^{1-s_1-s_2}\big)B^{s_2}|^r|||\nonumber
\\&\leq \left\{|||\,|A^{1+(t_1+t_2)}XB^{1-(s_1+s_2)}|^r|||\,\,\,|||\,|A^{1+(t_1-t_2)}XB^{1-(s_1-s_2)}|^r|||\right\}^{1/2}.
 \end{align}
 Applying \eqref{a213}, \eqref{a223} and the arithmetic-geometric mean inequality we get
 \begin{align*}
F(s_1,t_1)&=|||\,|A^{1-t_1}XB^{1+s_1}|^r|||\,\,\,|||\,|A^{1+t_1}XB^{1-s_1}|^r|||\\&\leq [F(s_1+s_2,t_1+t_2)F(s_1-s_2,t_1-t_2)]^{1/2}
\\&\leq{1\over2}[F(s_1+s_2,t_1+t_2)+F(s_1-s_2,t_1-t_2)].
 \end{align*}
   \end{proof}

 \begin{corollary}
Suppose that $A, B\in\mathcal{S}_n$, $ X\in \mathcal{M}_n$   and $r\geq0$. Then
\begin{align*}
|||\,|A^{1\over2}XB^{1\over2}|^r|||^2&\leq|||\,|A^{t}XB^{1-s}|^r||| \,\,\,|||\,|A^{1-t}XB^{s}|^r|||\\&
\leq\max \{|||\,|AX|^r||| \,\,\,|||\,|XB|^r|||,|||\,|AXB|^r||| \,\,\,|||\,|X|^r|||\},
 \end{align*}
 where $s,t\in[0,1]$.
 \end{corollary}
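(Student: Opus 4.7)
The plan is to read this corollary off from Theorem~\ref{a5} in two steps: first replace $A$ and $B$ by their square roots, and then affinely rescale the square $[-1,1]\times[-1,1]$ onto $[0,1]\times[0,1]$ so that the resulting function on $[0,1]^2$ is exactly the middle product appearing in the corollary.

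The first step is to apply Theorem~\ref{a5} to the positive semidefinite matrices $A^{1/2}$ and $B^{1/2}$ in place of $A$ and $B$. This gives that
$$\widetilde F(s,t)=|||\,|A^{(1-t)/2}XB^{(1+s)/2}|^r|||\,\,\,|||\,|A^{(1+t)/2}XB^{(1-s)/2}|^r|||$$
is convex on $[-1,1]\times[-1,1]$ and attains its minimum at $(s,t)=(0,0)$, where the value is $|||\,|A^{1/2}XB^{1/2}|^r|||^2$.

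The second step is the affine substitution $\sigma=(1-s)/2$, $\tau=(1-t)/2$, which bijectively maps $[-1,1]^2$ onto $[0,1]^2$ and sends $(0,0)$ to $(1/2,1/2)$. A direct calculation gives
$$\widetilde F(1-2\sigma,1-2\tau)=|||\,|A^{\tau}XB^{1-\sigma}|^r|||\,\,\,|||\,|A^{1-\tau}XB^{\sigma}|^r|||,$$
which, after renaming $(\sigma,\tau)$ back to $(s,t)$, is precisely the product $G(s,t)$ in the middle of the corollary. Since an affine change of variables preserves convexity, $G$ is convex on $[0,1]^2$ with minimum at $(1/2,1/2)$, and evaluating there yields the lower bound $|||\,|A^{1/2}XB^{1/2}|^r|||^2\le G(s,t)$.

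For the upper bound I would invoke the standard fact that a convex function on a compact convex polytope is bounded above by its values on the set of extreme points. The four vertices of $[0,1]^2$ yield only two distinct values of $G$, namely $G(0,0)=G(1,1)=|||\,|AX|^r|||\,|||\,|XB|^r|||$ and $G(0,1)=G(1,0)=|||\,|AXB|^r|||\,|||\,|X|^r|||$, which gives the required maximum on the right-hand side. The main (and essentially only) obstacle is the bookkeeping: one must line up the exponents $(1\pm t)/2$ and $(1\pm s)/2$ coming from Theorem~\ref{a5} with the form $t,1-t,s,1-s$ in the statement, and note that $A^{1/2},B^{1/2}\in\mathcal{S}_n$ so Theorem~\ref{a5} indeed applies. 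Once this alignment is fixed, no new matrix inequality is required beyond Theorem~\ref{a5}.
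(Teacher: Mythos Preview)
Your proposal is correct and follows essentially the same route as the paper: the paper also substitutes $A^{1/2},B^{1/2}$ for $A,B$ and performs the affine change of variables (it writes this as replacing $s,t$ by $2s-1,2t-1$, which differs from your $1-2\sigma,1-2\tau$ only by the sign symmetry $F(s,t)=F(-s,-t)$), obtaining the convex function $G$ on $[0,1]^2$ with minimum at $(1/2,1/2)$, and then uses the maximum-at-vertices principle together with the symmetry pairing of the four corners to get the upper bound. Your version is slightly more explicit in verifying the vertex values, but the argument is the same.
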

 \begin{proof}
  If we replace  $s$, $t$, $A, B$ by ${2s-1}$, ${2t-1}$, $A^{1\over2}, B^{1\over2}$, respectively, in Theorem \ref{a5}, we get the function $G(s,t)=|||\,|A^{t}XB^{1-s}|^r||| \,\,\,|||\,|A^{1-t}XB^{s}|^r|||$ is convex  on  $[0,1]\times[0,1]$ and attains its minimum at $({1\over2},{1\over2})$. Hence
\begin{align*}
|||\,|A^{1\over2}XB^{1\over2}|^r|||^2&\leq|||\,|A^{t}XB^{1-s}|^r||| \,\,\,|||\,|A^{1-t}XB^{s}|^r|||.
 \end{align*}
  In addition, since the function $G$ is continuous and convex on  $[0,1]\times[0,1]$, it follows that $G$ attains its
maximum at the vertices of the square. Moreover, due to the symmetry there are
two possibilities for the maximum.
 \end{proof}
 Dragomir \cite[p. 316]{drag} proved that
\begin{align}\label{a30}
F\left({a+b\over2}, {c+d\over2}\right)&\leq {1\over2}\left[{1\over b-a}\int_a^bF(x,{c+d\over2})dx+{1\over d-c}\int_c^dF({a+b\over2},y)dy\right]\nonumber\\&\leq{1\over (b-a)(d-c)}
\int_a^b\int_c^d F(x,y)dydx\nonumber\\&\leq{F(a,c)+F(a,d)+F(b,c)+F(b,d)\over4},
 \end{align}
whenever  $F$ is a convex  function   on  $[a,b]\times [c,d]\subseteq \mathbb{R}^2$. Applying inequality \eqref{a30} for the convex function $G(s,t)=|||\,|A^{1-t}XB^{s}|^r|||\,\,\,|||\,|A^{t}XB^{1-s}|^r|||$ on $[0,1]\times[0,1]$ we get the following result.
\begin{corollary}\label{a19}
Suppose that $A, B\in\mathcal{S}_n$, $ X\in \mathcal{M}_n$  and $r\geq0$. Then
\begin{align*}
2|||&\,|A^{1\over2}XB^{1\over2}|^r|||^2\leq {1\over1-2\alpha}\int_\alpha^{1-\alpha}
|||\,\,|A^{s}XB^{1\over2}|^r|||\,\,\,|||\,|A^{1-s}XB^{1\over2}|^r|||\,ds\\&\,\,\,+
{1\over1-2\beta}\int_\beta^{1-\beta} |||\,|A^{1\over2}XB^{1-t}|^r|||\,\,\,|||\,|A^{1\over2}XB^{t}|^r|||\,dt
\\&\leq{2\over(1-2\alpha)(1-2\beta)}\int_\alpha^{1-\alpha}\int_\beta^{1-\beta} |||\,|A^{s}XB^{1-t}|^r|||\,\,\,|||\,|A^{1-s}XB^{t}|^r|||\,dt\,ds\nonumber
\\&\leq|||\,|A^{\alpha}XB^{1-\beta}|^r|||\,\,\,|||\,|A^{1-\alpha}XB^{\beta}|^r|||+
|||\,|A^{1-\alpha}XB^{1-\beta}|^r|||\,\,\,|||\,|A^{\alpha}XB^{\beta}|^r|||
 \end{align*}
 for all $\alpha,\beta\in[0,{1\over2})$ and
\begin{align*}
2|||&\,|A^{1\over2}XB^{1\over2}|^r|||^2\leq {1\over2\alpha-1}\int_{1-\alpha}^\alpha
|||\,|A^{s}XB^{1\over2}|^r||| \,\,\,|||\,|A^{1-s}XB^{1\over2}|^r|||\,ds
\\&\,\,\,\,+{1\over2\beta-1}\int_{1-\beta}^\beta|||\,|A^{1\over2}XB^{1-t}|^r|||\,\,\,|||\,|A^{1\over2}XB^{t}|^r|||\,dt
\\&\leq{2\over(2\alpha-1)(2\beta-1)}\int_{1-\alpha}^{\alpha}\int_{1-\beta}^{\beta} |||\,|A^{s}XB^{1-t}|^r|||\,\,\,|||\,|A^{1-s}XB^{t}|^r|||\,dt\,ds\nonumber
\\&\leq|||\,|A^{\alpha}XB^{1-\beta}|^r|||\,\,\,|||\,|A^{1-\alpha}XB^{\beta}|^r|||
+|||\,|A^{1-\alpha}XB^{1-\beta}|^r|||\,\,\,|||\,|A^{\alpha}XB^{\beta}|^r|||
 \end{align*}
 for all $\alpha,\beta\in({1\over2},1]$.
 \end{corollary}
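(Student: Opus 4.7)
The strategy is to apply Dragomir's two-variable Hermite--Hadamard inequality (\ref{a30}) to the function
$G(s,t) = |||\,|A^{s}XB^{1-t}|^r|||\,|||\,|A^{1-s}XB^{t}|^r|||,$
which is convex on $[0,1]\times[0,1]$ by Theorem \ref{a5} (through the substitution $s \mapsto 2t-1$, $t \mapsto 2s-1$, $A \mapsto A^{1/2}$, $B \mapsto B^{1/2}$ into $F$). Two structural features of $G$ drive the entire computation: the value at the centre, $G(1/2, 1/2) = |||\,|A^{1/2}XB^{1/2}|^r|||^2$, and the symmetry $G(s,t) = G(1-s, 1-t)$.

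For the first displayed chain ($\alpha,\beta\in[0,1/2)$), I would apply (\ref{a30}) on the rectangle $[\alpha, 1-\alpha]\times[\beta, 1-\beta]$, whose centre is $(1/2, 1/2)$ and whose side lengths are $1-2\alpha$ and $1-2\beta$. Multiplying Dragomir's four-term chain through by $2$ immediately produces the factor $2$ in front of $|||\,|A^{1/2}XB^{1/2}|^r|||^2$ on the left-hand side of the statement. The middle two quantities in Dragomir's inequality are then twice the average of the two midline integrals $\frac{1}{1-2\alpha}\int_{\alpha}^{1-\alpha}G(x,1/2)\,dx$ and $\frac{1}{1-2\beta}\int_{\beta}^{1-\beta}G(1/2,y)\,dy$, followed by the full double-integral average $\frac{2}{(1-2\alpha)(1-2\beta)}\int_{\alpha}^{1-\alpha}\int_{\beta}^{1-\beta}G(x,y)\,dy\,dx$; evaluating $G(x,1/2)$ and $G(1/2,y)$ explicitly recovers exactly the integrands printed in the statement.

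For the rightmost term in Dragomir, one averages the four corner values $G(\alpha,\beta)$, $G(\alpha,1-\beta)$, $G(1-\alpha,\beta)$, $G(1-\alpha,1-\beta)$. By the symmetry $G(s,t) = G(1-s,1-t)$ these four values collapse into the two pairs $G(\alpha,\beta) = G(1-\alpha,1-\beta)$ and $G(\alpha,1-\beta) = G(1-\alpha,\beta)$, so their mean is $\tfrac{1}{2}[G(\alpha,\beta)+G(\alpha,1-\beta)]$, which after doubling and writing out $G$ in full gives precisely the two-summand expression on the right of the first chain. The second chain ($\alpha,\beta\in(1/2,1]$) is obtained by repeating the argument on the rectangle $[1-\alpha,\alpha]\times[1-\beta,\beta]$: its centre is still $(1/2,1/2)$, its widths become $2\alpha-1$ and $2\beta-1$, and the same $G$-symmetry makes the corner expressions transfer verbatim.

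The main obstacle I anticipate is purely bookkeeping: matching the precise ordering of the exponents of $A$ and $B$ that appear inside the unitarily invariant norms at the four corners of $G$ with the exponents printed in the statement. This is routine once one uses the commutativity of the scalar product $|||\cdot|||\,|||\cdot|||$ together with the $(s,t)\mapsto(1-s,1-t)$ symmetry of $G$, so no new inequality or nontrivial manipulation beyond Dragomir (\ref{a30}) and the convexity already proved in Theorem \ref{a5} is required.
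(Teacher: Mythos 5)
Your proposal is correct and follows essentially the same route as the paper: both apply Dragomir's inequality \eqref{a30} to the convex function $G(s,t)=|||\,|A^{s}XB^{1-t}|^r|||\,\,|||\,|A^{1-s}XB^{t}|^r|||$ (convexity coming from Theorem \ref{a5} via the substitution $s\mapsto 2s-1$, $t\mapsto 2t-1$, $A\mapsto A^{1/2}$, $B\mapsto B^{1/2}$) on the rectangle $[\alpha,1-\alpha]\times[\beta,1-\beta]$ (resp.\ $[1-\alpha,\alpha]\times[1-\beta,\beta]$), multiply by $2$, and use the symmetry $G(s,t)=G(1-s,1-t)$ to collapse the four corner terms into two. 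Your write-up in fact supplies the bookkeeping the paper leaves implicit, and uses the correct two-variable function where the paper's proof contains a typo ($G$ written as depending only on $t$).
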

\begin{proof}
 Let $G(s,t)=|||\,|A^{t}XB^{1-t}|^r||| \,\,\,|||\,|A^{1-t}XB^{t}|^r|||$.
 If we replace $a$ by $\alpha$, $b$ by $1-\alpha$, $c$ by $\beta$ and $d$ by $1-\beta\,\,(\alpha,\beta\in[0,{1\over2}))$ for the convex function $G$ in \eqref{a30} we reach the first inequality and if we replace $a$ by $1-\alpha$, $b$ by $\alpha$,  $c$ by $1-\beta$ and $d$ by  $\beta\,\,(\alpha,\beta\in({1\over2},1])$ in \eqref{a30} we obtain  the second inequality.
\end{proof}
 The spacial case $\alpha=\beta=1$ of Theorem \ref{a19} reads as follows.
\begin{corollary}\label{a21}
Suppose that $A, B\in\mathcal{S}_n$, $ X\in \mathcal{M}_n$   and $r\geq0$. Then
\begin{align*}
2|||\,|A^{1\over2}XB^{1\over2}|^r|||^2&\leq \int_0^1
|||\,|A^{s}XB^{1\over2}|^r||| \,\,\,|||\,|A^{1-s}XB^{1\over2}|^r|||\,ds\\&\,\,\,+\int_0^1
|||\,|A^{1\over2}XB^{1-t}|^r|||\,\,\,|||\,|A^{1\over2}XB^{t}|^r|||\,dt
\\&\leq2\int_0^1\int_0^1|||\,|A^{s}XB^{1-t}|^r|||\,\,\,|||\,|A^{1-s}XB^{t}|^r|||\,dt\,ds
\\&\leq|||\,|AX|^r|||\,\,\,|||\,|XB|^r|||+|||\,|X|^r|||\,\,\,|||\,|AXB|^r|||.
 \end{align*}
\end{corollary}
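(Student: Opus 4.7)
The strategy is simply to specialize Corollary \ref{a19}, taking advantage of the fact that both endpoints $0$ and $1$ are in the permitted ranges for the parameters. I would apply the second chain of inequalities in Corollary \ref{a19} with $\alpha=\beta=1$ (noting that $1\in(\tfrac12,1]$, so this is legitimate); the first chain with $\alpha=\beta=0$ would work equally well and gives the same output.

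With $\alpha=\beta=1$, the normalizing factors $\frac{1}{2\alpha-1}$ and $\frac{1}{2\beta-1}$ both become $1$, and the integrals over $[1-\alpha,\alpha]=[0,1]$ and $[1-\beta,\beta]=[0,1]$ reproduce exactly the single and double integrals that appear in Corollary \ref{a21}. For the rightmost term I would then evaluate the four vertex contributions using $A^0=I$, $B^0=I$, $A^1=A$, $B^1=B$:
\begin{align*}
|||\,|A^{\alpha}XB^{1-\beta}|^r|||\,|||\,|A^{1-\alpha}XB^{\beta}|^r|||&=|||\,|AX|^r|||\,|||\,|XB|^r|||,\\
|||\,|A^{1-\alpha}XB^{1-\beta}|^r|||\,|||\,|A^{\alpha}XB^{\beta}|^r|||&=|||\,|X|^r|||\,|||\,|AXB|^r|||.
\end{align*}
Assembling the four pieces yields precisely the stated chain of inequalities.

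There is no real obstacle here; the content of Corollary \ref{a21} is entirely contained in Corollary \ref{a19}, and the only thing to verify is that the substitution is admissible (which it is, since the parameter domain in the second half of Corollary \ref{a19} is closed on the right at $1$) and that the four endpoint terms simplify as claimed. I would therefore write the proof as a one-line reduction: ``Take $\alpha=\beta=1$ in the second inequality of Corollary \ref{a19}.''
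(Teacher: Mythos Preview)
Your proposal is correct and matches the paper's own justification exactly: the paper introduces Corollary~\ref{a21} as ``the special case $\alpha=\beta=1$'' of Corollary~\ref{a19}. Your verification that the substitution is admissible and that the endpoint terms simplify as stated is all that is needed.
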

\section{Further refinements of   the Cauchy-Schwarz inequality }
In this section, we establish some refinements of the Cauchy-Schwarz inequality.
 The following result, derived in the recent papers \cite{krnic2,krnic1}.
 \begin{lemma}\label{krnic23}\cite{krnic2}
 Let $f:[a,b]\rightarrow \mathbb{R}$ be a convex function and $\delta\in[a,b],p\in(0,1)$ be fixed parameters. Then the function $\varphi:[a,b]\rightarrow R$, defined by $$\varphi(t)=(1-p)f(\delta)+pf(t)-f((1-p)\delta+pt)$$ is decreasing on $[a,\delta]$ and is increasing on $[\delta,b]$.
 \end{lemma}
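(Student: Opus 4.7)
First observe that $\varphi(\delta)=0$ and that convexity of $f$ gives $\varphi(t)\geq 0$ on $[a,b]$, so the real content is pure monotonicity on each side of $\delta$. The plan is to reduce this to the classical chord-slope monotonicity of convex functions: for convex $f$, the divided difference $\Delta(x,y):=(f(y)-f(x))/(y-x)$ is nondecreasing in each endpoint separately. This is the three-slope lemma, and applying it twice yields the combined statement
\begin{align*}
\Delta(a_1,b_1)\leq\Delta(a_2,b_2)\qquad\text{whenever}\qquad a_1\leq a_2,\ b_1\leq b_2,\ a_i<b_i.
\end{align*}

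Next I would fix $\delta\leq t_1<t_2\leq b$, set $u_i:=(1-p)\delta+pt_i$, and rewrite
\begin{align*}
\varphi(t_2)-\varphi(t_1)=p\bigl[f(t_2)-f(t_1)\bigr]-\bigl[f(u_2)-f(u_1)\bigr].
\end{align*}
Since $u_2-u_1=p(t_2-t_1)$, this recasts as
\begin{align*}
\varphi(t_2)-\varphi(t_1)=p(t_2-t_1)\,\bigl[\Delta(t_1,t_2)-\Delta(u_1,u_2)\bigr].
\end{align*}
Because $p\in(0,1)$ and $t_i\geq\delta$, the convex combination satisfies $\delta\leq u_i\leq t_i$; in particular $u_1\leq t_1$ and $u_2\leq t_2$. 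The chord-monotonicity statement above then gives $\Delta(u_1,u_2)\leq\Delta(t_1,t_2)$, so $\varphi(t_2)-\varphi(t_1)\geq 0$, proving that $\varphi$ is increasing on $[\delta,b]$.

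The case $a\leq t_1<t_2\leq\delta$ is mirror-symmetric: now $t_i\leq u_i\leq\delta$, so $t_1\leq u_1$ and $t_2\leq u_2$, the chord inequality reverses to $\Delta(t_1,t_2)\leq\Delta(u_1,u_2)$, and the same identity gives $\varphi(t_2)-\varphi(t_1)\leq 0$, i.e.\ $\varphi$ is decreasing on $[a,\delta]$.

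The whole argument is driven by a single identity, so no genuine obstacle is expected; the only point to check is the double application of the three-slope lemma used in the combined chord-monotonicity statement, which is valid because $p>0$ and $t_1<t_2$ guarantee $u_1<u_2$. If one prefers a calculus approach and $f$ is differentiable, the proof reduces to observing that $\varphi'(t)=p\bigl[f'(t)-f'((1-p)\delta+pt)\bigr]$ and invoking the monotonicity of $f'$, but the chord argument above has the advantage of not assuming any regularity on $f$.
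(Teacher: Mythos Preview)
Your argument is correct. The identity
\[
\varphi(t_2)-\varphi(t_1)=p(t_2-t_1)\bigl[\Delta(t_1,t_2)-\Delta(u_1,u_2)\bigr]
\]
is exactly right, and the ``combined'' chord-slope monotonicity you invoke does follow from two applications of the three-slope lemma: with $a_1\leq a_2$, $b_1\leq b_2$, $a_i<b_i$ one chains $\Delta(a_1,b_1)\leq\Delta(a_1,b_2)\leq\Delta(a_2,b_2)$ (or $\Delta(a_1,b_1)\leq\Delta(a_2,b_1)\leq\Delta(a_2,b_2)$ when $a_2<b_1$), so no regularity on $f$ is needed. The mirror case is handled correctly as well.

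As for comparison with the paper: there is nothing to compare. The lemma is quoted from \cite{krnic2} and is not proved in the present paper; it is used as a black box in the proof of Theorem~\ref{a12}. Your self-contained argument is therefore a genuine addition rather than a reproduction. The differentiable shortcut you mention at the end, $\varphi'(t)=p\bigl[f'(t)-f'((1-p)\delta+pt)\bigr]$, is the one-line version most readers would reach for, but your chord-based proof is cleaner in that it covers arbitrary convex $f$ without any smoothing or approximation step.
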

 In the next result, we show a refinement of the right side of inequality \eqref{d41}.

\begin{theorem}\label{a12}
Let  $A, B\in\mathcal{S}_n,$ $ X\in \mathcal{M}_n$, $r\geq0$, $\mu\in[0,1]$, $p\in(0,1)$ and let $|||\,.\,|||$ be any   unitarily invariant norm. Then
\begin{align}\label{lk3x}
|||\,|AX|^r|||\,\,\,|||\,|XB|^r|||&-|||\,|A^{\mu}XB^{1-\mu}|^r|||  \,\,\,|||\,|A^{1-\mu}XB^{\mu}|^r|||\nonumber\\&\geq
{1\over p}\left(f({1-p\over2})-f({1-p\over2}+p\mu)\right)\geq0,
 \end{align}
  where $f(t)=|||\,|A^{t}XB^{1-t}|^r||| \,\,\,|||\,|A^{1-t}XB^{t}|^r|||\,\,(t\in[0,1])$.
 \end{theorem}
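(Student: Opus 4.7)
The plan is to invoke Lemma \ref{krnic23} with the convex function $f(t) = |||\,|A^{t}XB^{1-t}|^r|||\,|||\,|A^{1-t}XB^{t}|^r|||$ on $[0,1]$ (whose convexity was recalled in the introduction) and $\delta = \tfrac{1}{2}$, which is the minimizer of $f$. Two features of $f$ will drive the argument: the boundary value $f(0) = f(1) = |||\,|AX|^r|||\,|||\,|XB|^r|||$ (since $A^0 = B^0 = I$), and the symmetry $f(t) = f(1-t)$ on $[0,1]$.

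Fix $p\in(0,1)$ and set
$$\varphi(t) = (1-p) f(\tfrac{1}{2}) + p f(t) - f\!\left(\tfrac{1-p}{2} + pt\right).$$
By Lemma \ref{krnic23}, $\varphi$ is decreasing on $[0,\tfrac{1}{2}]$ and increasing on $[\tfrac{1}{2},1]$. The symmetry of $f$ gives $f(\tfrac{1-p}{2}) = f(\tfrac{1+p}{2})$, which combined with $f(0)=f(1)$ yields $\varphi(0) = \varphi(1)$. Hence $\varphi$ attains its maximum on $[0,1]$ at both endpoints, and in particular $\varphi(\mu)\leq\varphi(0)$ for every $\mu\in[0,1]$. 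Unwinding $\varphi(0)-\varphi(\mu)\geq 0$ gives
$$p\bigl(f(0) - f(\mu)\bigr) \geq f(\tfrac{1-p}{2}) - f\!\left(\tfrac{1-p}{2}+p\mu\right),$$
which, after dividing by $p$, is precisely the first inequality of \eqref{lk3x}.

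For the second inequality in \eqref{lk3x} (nonnegativity of the middle quantity), I would note that as $\mu$ ranges over $[0,1]$ the point $\tfrac{1-p}{2}+p\mu$ ranges over $[\tfrac{1-p}{2},\tfrac{1+p}{2}]$, an interval symmetric about $\tfrac{1}{2}$ and contained in $[0,1]$. Because $f$ is decreasing on $[0,\tfrac{1}{2}]$, increasing on $[\tfrac{1}{2},1]$, and $f(\tfrac{1-p}{2}) = f(\tfrac{1+p}{2})$, this common endpoint value is the maximum of $f$ on that interval, so $f(\tfrac{1-p}{2}) \geq f\!\left(\tfrac{1-p}{2}+p\mu\right)$, which closes the argument.

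The step I expect to require most care is upgrading Lemma \ref{krnic23} to a uniform estimate on all of $[0,1]$: the lemma only supplies monotonicity of $\varphi$ on each side of $\delta$, and without the ambient symmetry $f(t)=f(1-t)$ one would not get $\varphi(0)=\varphi(1)$ and would have to split into the cases $\mu\leq\tfrac{1}{2}$ and $\mu\geq\tfrac{1}{2}$, controlling the latter by $\varphi(1)$ rather than $\varphi(0)$. The symmetry of $f$ around $\tfrac{1}{2}$ is what makes the one-shot argument possible and is the conceptual core of the proof.
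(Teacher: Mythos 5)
Your proof is correct and follows essentially the same route as the paper: Lemma \ref{krnic23} applied to the convex function $f$ with $\delta=\tfrac{1}{2}$, combined with the symmetry $f(t)=f(1-t)$ and the monotonicity of $f$ on either side of $\tfrac{1}{2}$ for the nonnegativity claim. The only difference is organizational --- the paper splits into the cases $\mu\le\tfrac{1}{2}$ and $\mu\ge\tfrac{1}{2}$ and uses the symmetry to reduce the second case to the first, whereas you use the symmetry once to obtain $\varphi(0)=\varphi(1)$ and treat all $\mu\in[0,1]$ in one step, a minor streamlining of the same argument.
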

 \begin{proof}
Assume that the functions $f(t)=|||\,|A^{t}XB^{1-t}|^r||| \,\,\,|||\,|A^{1-t}XB^{t}|^r|||\,\,(t\in[0,1])$ and  $\varphi(\mu)=(1-p)f\left({1\over2}\right)+pf(\mu)-f\left({1-p\over2}+p\mu\right)\,\,(\mu\in[0,1])$.
Using Lemma \ref{krnic23}, we see that  $\varphi$ is decreasing on $[0,{1\over2}]$ and increasing on $[{1\over2},1]$. Let that $\mu\in[0,{1\over2}]$. Since $\varphi$ is decreasing on $[0,{1\over2}]$, we have $\varphi(0)\geq\varphi(\mu)$, that is,
\begin{align*}
pf(0)-f\left({1-p\over2}\right)\geq pf(\mu)-f\left({1-p\over2}+p\mu\right),
 \end{align*}
 whence \begin{align}\label{mn65}
f(0)-f(\mu)\geq {1\over p}\left[ f\left({1-p\over2}\right)-f\left({1-p\over2}+p\mu\right)\right],
 \end{align}
 which yields desired inequality. Note, the right hand side of \eqref{mn65} is decreasing and ${1-p\over2}+p\mu\geq {1-p\over2}$. Now let $\mu\in[{1\over2},1]$. So $0\leq1-\mu\leq{1\over2}$. By the symmetry property of \eqref{mn65}  with respect to $\mu$ , if we replace $\mu$ by $1-\mu$, then
\begin{align*}
f(0)-f(1-\mu)\geq {1\over p}\left[ f\left({1-p\over2}\right)-f\left({1-p\over2}-p\mu\right)\right],
 \end{align*}
 which is reduce to \eqref{lk3x} since $f(1-\mu)=f(\mu),\,\,(\mu\in[0,1])$.
 \end{proof}
 By the same strategy as in the proof of Theorem \ref{a12}, we get a refinement of the left side  inequality \eqref{d41}.
 \begin{theorem}\label{a12}
Let  $A, B\in\mathcal{S}_n,$ $ X\in \mathcal{M}_n$, $r\geq0$, $\mu\in[0,1]$, $p\in(0,1)$ and let $|||\,.\,|||$ be any   unitarily invariant norm. Then
\begin{align*}
|||\,|A^{\mu}XB^{1-\mu}|^r|||  \,\,\,&|||\,|A^{1-\mu}XB^{\mu}|^r|||-|||\,|A^{1\over2}XB^{1\over2}|^r|||^2\\&\geq
{1\over p}\left(f({1-p\over2}+p\mu)-|||\,|A^{1\over2}XB^{1\over2}|^r|||^2\right)\geq0,
 \end{align*}
  where $f(t)=|||\,|A^{t}XB^{1-t}|^r||| \,\,\,|||\,|A^{1-t}XB^{t}|^r|||\,\,(t\in[0,1])$.
 \end{theorem}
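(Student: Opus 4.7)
The plan is to mimic the strategy of the preceding Theorem almost verbatim, but pivot the auxiliary function around the argument $1/2$ instead of the boundary $0$. Let $f(t)=|||\,|A^{t}XB^{1-t}|^r|||\,\,\,|||\,|A^{1-t}XB^{t}|^r|||$, which is known to be convex on $[0,1]$ and to attain its minimum at $t=1/2$, so in particular $f(1/2)=|||\,|A^{1/2}XB^{1/2}|^r|||^{2}$.

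First I would introduce the auxiliary function
\[
\varphi(\mu)=(1-p)f\!\left(\tfrac{1}{2}\right)+p\,f(\mu)-f\!\left(\tfrac{1-p}{2}+p\mu\right),\qquad \mu\in[0,1],
\]
which is precisely the function from Lemma \ref{krnic23} applied with $\delta=1/2$. That lemma then says $\varphi$ is decreasing on $[0,1/2]$ and increasing on $[1/2,1]$, so $\varphi$ attains its minimum at $\mu=1/2$. A direct evaluation gives
\[
\varphi(1/2)=(1-p)f(1/2)+p\,f(1/2)-f\!\left(\tfrac{1-p}{2}+\tfrac{p}{2}\right)=f(1/2)-f(1/2)=0,
\]
so $\varphi(\mu)\geq 0$ for every $\mu\in[0,1]$.

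Next I would rearrange $\varphi(\mu)\geq 0$ as
\[
p\bigl(f(\mu)-f(1/2)\bigr)\ \geq\ f\!\left(\tfrac{1-p}{2}+p\mu\right)-f(1/2),
\]
and divide by $p\in(0,1)$ to obtain exactly the claimed middle inequality. For the final ``$\geq 0$'' in the statement, I would invoke the known fact (recalled right after \eqref{a6}) that $f$ is minimized at $t=1/2$; since $\tfrac{1-p}{2}+p\mu\in[0,1]$, this gives $f(\tfrac{1-p}{2}+p\mu)\geq f(1/2)$, completing the chain.

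Because the argument is completely parallel to the proof of the preceding theorem, there is no substantial obstacle. The only points that deserve care are (i) checking that $\delta=1/2$ is an interior point of $[0,1]$, so that Lemma \ref{krnic23} yields monotonicity on both sides of $1/2$ and hence a global minimum at $\mu=1/2$ (whereas the previous theorem used a boundary minimum $\varphi(0)$); and (ii) unlike the previous proof, no symmetry step is required, since $\varphi(\mu)\geq 0$ is obtained uniformly on the whole interval $[0,1]$ from the single inequality $\varphi(\mu)\geq \varphi(1/2)$.
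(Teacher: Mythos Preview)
Your proposal is correct and is exactly the adaptation the paper has in mind: the paper gives no separate proof for this theorem, merely stating that it follows ``by the same strategy'' as the preceding one, and your argument using $\varphi(\mu)\geq\varphi(1/2)=0$ with the same auxiliary function $\varphi$ is that strategy. Your observation that the symmetry step from the previous proof becomes unnecessary here (since the global minimum of $\varphi$ at $\mu=1/2$ handles both halves of $[0,1]$ at once) is a nice clarification that the paper does not make explicit.
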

\section{Some inequalities  involving numerical radius}
 In this section we show  inequalities  involving Heinz type numerical radius.
A continuous real valued function $f$ defined on an interval
$(a,b)$ with $a\geq0$ is called Kwong function if the matrix
\begin{align*}
\left({f(a_i)+f(a_j)\over a_i+a_j}\right)_{i,j=1}^n
\end{align*}
 is positive semidefinite   for any distinct real numbers $a_1,\cdots, a_n$ in $(a,b)$.

\begin{lemma}\cite[Corollary 4]{Okubo}\label{Okubo1}
Let  $A=[a_{ij}]\in\mathcal{M}_n$ be positive semidefinite. Then
\begin{align*}
\|S_A\|_\omega=\max_i a_{ii}.
 \end{align*}
\end{lemma}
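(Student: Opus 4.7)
The plan is to prove the two inequalities $\|S_A\|_\omega \geq \max_i a_{ii}$ and $\|S_A\|_\omega \leq \max_i a_{ii}$ separately. The lower bound is immediate by specializing $X$: taking $X = I$ gives $A \circ I = \mathrm{diag}(a_{11},\ldots,a_{nn})$, whose diagonal entries are nonnegative since $A\geq 0$, hence $\omega(A \circ I) = \max_i a_{ii}$, while $\omega(I) = 1$. This already forces $\|S_A\|_\omega \geq \max_i a_{ii}$.

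For the upper bound, I would exploit the positive semidefiniteness of $A$ via a factorization. Writing $A = \sum_{k=1}^n w_k w_k^*$ (via a spectral decomposition, say), the entries satisfy $a_{ij} = \sum_{k} (w_k)_i \overline{(w_k)_j}$. Introducing the diagonal matrices $D_k = \mathrm{diag}((w_k)_1,\ldots,(w_k)_n)$, an entrywise check yields the Schur-product representation
\begin{equation*}
A \circ X \;=\; \sum_{k=1}^n D_k\, X\, D_k^*,
\end{equation*}
which is the workhorse of the argument.

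Given any unit vector $\xi \in \mathbb{C}^n$, set $\eta_k = D_k^* \xi$. Then, using the standard bound $|\langle X\eta_k, \eta_k\rangle| \leq \omega(X)\|\eta_k\|^2$,
\begin{equation*}
\bigl|\langle (A \circ X)\xi, \xi\rangle\bigr|
= \Bigl|\sum_{k=1}^n \langle X\eta_k, \eta_k\rangle\Bigr|
\leq \omega(X)\sum_{k=1}^n \|\eta_k\|^2.
\end{equation*}
The crucial observation is that, since $\sum_k |(w_k)_i|^2 = a_{ii}$,
\begin{equation*}
\sum_{k=1}^n \|\eta_k\|^2 \;=\; \sum_{k=1}^n \sum_{i=1}^n |(w_k)_i|^2\, |\xi_i|^2 \;=\; \sum_{i=1}^n a_{ii}\,|\xi_i|^2 \;\leq\; \max_i a_{ii}.
\end{equation*}
Taking the supremum over unit vectors $\xi$ and over $X \neq 0$ gives $\omega(A \circ X) \leq (\max_i a_{ii})\,\omega(X)$, completing the proof.

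The main obstacle is locating the correct decomposition: one must use $A \geq 0$ to produce a Schur-product representation with a \emph{single} family $\{D_k\}$ of diagonal matrices, so that the quadratic form $\langle X\eta_k, \eta_k\rangle$ appears diagonally on both sides and can be controlled by $\omega(X)$. For a general (non-positive) $A$ one would be forced into two distinct families $D_k, E_k$, and the numerical radius bound would collapse. This is precisely where the hypothesis $A \in \mathcal{S}_n$ is essential.
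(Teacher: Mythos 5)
The paper does not prove this lemma at all---it is quoted verbatim from Ando--Okubo \cite[Corollary 4]{Okubo}---so there is no internal proof to compare against; what you have written is a self-contained replacement for the citation. Your argument is correct and complete. The lower bound via $X=I$ is fine, since $A\circ I=\mathrm{diag}(a_{11},\dots,a_{nn})$ is normal with nonnegative eigenvalues, so its numerical radius is $\max_i a_{ii}$, and $\omega(I)=1$. For the upper bound, the identity $A\circ X=\sum_{k}D_kXD_k^*$ checks out entrywise because $(D_kXD_k^*)_{ij}=(w_k)_i\overline{(w_k)_j}x_{ij}$ and $\sum_k (w_k)_i\overline{(w_k)_j}=a_{ij}$; the estimate $|\langle X\eta_k,\eta_k\rangle|\leq\omega(X)\|\eta_k\|^2$ is the standard homogeneity of the numerical radius; and the bookkeeping $\sum_k\|\eta_k\|^2=\sum_i a_{ii}|\xi_i|^2\leq\max_i a_{ii}$ is exactly right. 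Your closing remark correctly identifies where positivity enters: only for $A\geq0$ can one take the two diagonal families in the Schur-product representation to coincide, which is what makes the summands genuine quadratic forms $\langle X\eta_k,\eta_k\rangle$ controllable by $\omega(X)$; this is in essence the mechanism behind the Ando--Okubo corollary, so your route is the natural direct proof of the special case the paper actually uses.
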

\begin{lemma}\cite[Theorem 3.4]{Zhang1}\label{shour}
(Spectral Decomposition) Let $A\in\mathcal{M}_n$ with eigenvalues
$\lambda_1, \lambda_2, \cdots, \lambda_n$. Then $A$ is normal if and
only if there exists a unitary matrix $U$ such that
\begin{align*}
U^*AU={\rm diag}(\lambda_1, \lambda_2, \cdots, \lambda_n).
 \end{align*}
 In particular, $A$ is positive definite if and only if the $\lambda_j\,\,(1\leq j \leq n)$ are positive.
\end{lemma}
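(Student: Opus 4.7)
The plan is to prove the biconditional in two separate directions and then deduce the positive-definite characterization as a consequence. For the easy direction, assuming $U^*AU = D := \mathrm{diag}(\lambda_1,\dots,\lambda_n)$ for some unitary $U$, I would write $A = UDU^*$ and compute $AA^* = UDD^*U^*$ and $A^*A = UD^*DU^*$; since every diagonal matrix commutes with its conjugate transpose, $DD^* = D^*D$, which gives $AA^* = A^*A$ and so $A$ is normal.

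For the nontrivial direction, I would first invoke Schur's triangularization theorem to obtain a unitary $V$ with $T := V^*AV$ upper triangular and having the eigenvalues $\lambda_1,\dots,\lambda_n$ on its diagonal. Since unitary similarity preserves normality, $TT^* = T^*T$. The key step is to argue that $T$ is in fact diagonal, which I would establish by induction on the row index. Equating the $(1,1)$ entries of $TT^*$ and $T^*T$ yields $\sum_{k\geq 1}|t_{1k}|^2 = |t_{11}|^2$, and hence $t_{1k} = 0$ for every $k > 1$. Using the vanishing of the first row's off-diagonal entries, comparing the $(2,2)$ entries then forces $t_{2k}=0$ for $k>2$, and so on inductively. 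This shows $T$ is diagonal, so taking $U = V$ concludes the proof of the main equivalence.

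For the positive-definite assertion, I would observe that positive definiteness (by the conventions of the paper) already includes the requirement $A = A^*$, so $A$ is in particular normal and the first part applies to give $A = UDU^*$ with $D = \mathrm{diag}(\lambda_1,\dots,\lambda_n)$. Applying the defining inequality $\langle Ax,x\rangle > 0$ at the vector $x = Ue_j$ yields $\lambda_j = \langle De_j,e_j\rangle > 0$. Conversely, if all $\lambda_j > 0$ and $A = UDU^*$, then for nonzero $x$, setting $y := U^*x \neq 0$ gives $\langle Ax,x\rangle = \langle Dy,y\rangle = \sum_{j}\lambda_j |y_j|^2 > 0$, so $A$ is positive definite.

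The main obstacle I anticipate is the induction showing that a normal upper-triangular matrix is diagonal: each step relies on the vanishing of entries established in previous rows in order to isolate the next row's off-diagonal entries, and one must carefully pair the summations $(TT^*)_{jj} = \sum_{k}|t_{jk}|^2$ with $(T^*T)_{jj} = \sum_{k}|t_{kj}|^2$ to extract the correct cancellations. Once this structural fact is in place, everything else in the argument reduces to routine verification of matrix identities.
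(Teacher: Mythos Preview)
Your argument is correct and follows the standard textbook route: Schur triangularization followed by the inductive verification that a normal upper-triangular matrix must be diagonal, together with the straightforward quadratic-form computation for the positive-definite addendum. There is nothing to fault in the logic.

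However, note that the paper does not actually supply its own proof of this lemma. It is quoted verbatim as \cite[Theorem 3.4]{Zhang1} and used as a black box in the proof of Theorem~\ref{a18}. So there is no ``paper's proof'' to compare against; you have simply reproduced the classical argument that one finds in Zhang's \emph{Matrix Theory} or any comparable reference. Your write-up would serve perfectly well as a self-contained justification if one wished to include it, but in the context of this paper the result is treated as background and not re-proved.
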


\begin{theorem}\label{a18}
Suppose that  $A\in\mathcal{P}_n$, $X\in\mathcal{M}_n$, $\alpha\in[0,1]$ and ${f\over g}$ be a Kwong function such that ${f(t) g(t)}\leq t\,\,(t\geq0)$. Then
\begin{align*}
\omega(f(A)Xg(A)+g(A)Xf(A))\leq \omega(AX+XA)
 \end{align*}
\end{theorem}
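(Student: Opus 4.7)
The plan is to reduce the inequality to a Schur-multiplier problem by diagonalizing $A$, and then apply Okubo's characterization of $\|S_M\|_\omega$ (Lemma~\ref{Okubo1}). The Kwong hypothesis on $f/g$ together with the pointwise bound $f(t)g(t)\leq t$ will enter at precisely the step where I verify positivity and a diagonal bound on the multiplier matrix.

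First, I would invoke the spectral decomposition (Lemma~\ref{shour}) to write $A = U^* D U$ with $U$ unitary and $D = \mathrm{diag}(\lambda_1,\dots,\lambda_n)$, $\lambda_j > 0$. Setting $Y = U X U^*$ and using $f(A) = U^* f(D) U$, $g(A) = U^* g(D) U$, together with the weak unitary invariance $\omega(U^*ZU) = \omega(Z)$, the inequality reduces to
\[
\omega\bigl(f(D)Yg(D) + g(D)Yf(D)\bigr) \leq \omega(DY + YD).
\]
Since $D$ is diagonal, a direct entrywise computation shows that
\[
f(D)Yg(D) + g(D)Yf(D) = M \circ (DY + YD),
\]
where $M$ is the $n \times n$ matrix with entries
\[
M_{ij} = \frac{f(\lambda_i)g(\lambda_j) + g(\lambda_i)f(\lambda_j)}{\lambda_i + \lambda_j}.
\]

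The crux is to show that $M$ is positive semidefinite with $\max_i M_{ii} \leq 1$. For positivity, I would factor
\[
M_{ij} = g(\lambda_i)\,g(\lambda_j)\cdot\frac{(f/g)(\lambda_i) + (f/g)(\lambda_j)}{\lambda_i + \lambda_j},
\]
exhibiting $M$ as the Hadamard product of the rank-one Gram matrix $\bigl[g(\lambda_i)g(\lambda_j)\bigr]$, which is positive semidefinite, and the matrix $\bigl[((f/g)(\lambda_i)+(f/g)(\lambda_j))/(\lambda_i+\lambda_j)\bigr]$, which is positive semidefinite precisely because $f/g$ is a Kwong function. The Schur product theorem then gives $M \geq 0$. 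The diagonal entries reduce to $M_{ii} = f(\lambda_i)g(\lambda_i)/\lambda_i \leq 1$ by the standing assumption $f(t)g(t)\leq t$.

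Finally, Lemma~\ref{Okubo1} yields $\|S_M\|_\omega = \max_i M_{ii} \leq 1$, hence $\omega(M \circ Z) \leq \omega(Z)$ for every $Z \in \mathcal{M}_n$; specializing to $Z = DY + YD$ and reversing the unitary reduction gives the desired inequality. The only genuinely nontrivial step is the middle one: recognizing that the multiplier matrix $M$ factors as a Hadamard product in which the Kwong hypothesis supplies the positivity of the quotient matrix, while $f(t)g(t)\leq t$ is reserved for controlling the diagonal. The hypothesis $\alpha \in [0,1]$ in the statement does not appear to enter the argument.
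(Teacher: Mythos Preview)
Your argument is correct and follows the same route as the paper: diagonalize $A$, identify the Schur multiplier $M_{ij}=\dfrac{f(\lambda_i)g(\lambda_j)+g(\lambda_i)f(\lambda_j)}{\lambda_i+\lambda_j}$, use the Kwong hypothesis on $f/g$ to get $M\geq 0$, use $f(t)g(t)\leq t$ for $\max_i M_{ii}\leq 1$, and conclude via Lemma~\ref{Okubo1}. The only cosmetic difference is that the paper writes the positivity of $M$ as the diagonal congruence $M=S\bigl[\tfrac{(f/g)(\lambda_i)+(f/g)(\lambda_j)}{\lambda_i+\lambda_j}\bigr]S$ with $S=\mathrm{diag}(g(\lambda_1),\dots,g(\lambda_n))$, while you phrase it as a Hadamard product with the rank-one matrix $[g(\lambda_i)g(\lambda_j)]$; these are identical operations. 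Your observation that the parameter $\alpha\in[0,1]$ plays no role is also correct.
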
\begin{proof}
 Applying Lemma \ref{shour}, we can assume that $A={\rm diag}(a_1, a_2, \cdots, a_n)$ is diagonalize,  where $a_j\,\,(j=1,2, \cdots, n)$ are positive numbers. Let $Z=[z_{ij}]\in\mathcal{M}_n$ with the entries $z_{ij}={f(a_i)g(a_i)+f(a_j)g(a_j)\over a_i+a_j}\,\,(1\leq i,j\leq n)$. Since ${f\over g}$ is a Kwong function,
 \begin{align*}
Z=S\left({f(a_i)g^{-1}(a_i)+f(a_j)g^{-1}(a_j)\over a_i+a_j}\right)_{i,j=1}^nS
 \end{align*}
 is positive semidefinite where $S={\rm diag}\left(g(a_1),\cdots,g(a_n)\right)$.
 It  follows from  Lemma \ref{Okubo1} that
 \begin{align*}
\|S_Z\|_\omega=\max_i z_{ii}={f(a_i)g(a_i)\over a_i} \leq1,
 \end{align*}
 or equivalently, ${\omega(Z\circ X)\over \omega(X)}\leq 1\,\,(0\neq X \in\mathcal{M}_n)$. Let   $E=[{1\over a_i+a_j}]$ and $ D=[f(a_i)g(a_i)+f(a_j)g(a_j)]\in \mathcal{M}_n$. Hence
  \begin{align*}
\omega(D\circ E\circ X)=\omega(Z\circ X)\leq \omega(X)\qquad(X \in\mathcal{M}_n).
 \end{align*}
  Let  the matrix $C$ be the entrywise inverse of $E$, i.e., $C\circ E=J$. Thus $\omega(D\circ X)\leq \omega(C\circ X)\,\,(X \in\mathcal{M}_n)$.  Hence
\begin{align*}
\omega(f(A)Xg(A)+g(A)Xf(A))\leq \omega(AX+XA).
 \end{align*}
 \end{proof}
 Using $f(t)=t^\alpha$  and $g(t)=t^{1-\alpha}$ in Theorem \ref{a18}
 we get  the following Heinz type inequality in the following result.
\begin{corollary}
Suppose that $A\in\mathcal{P}_n$, $X\in\mathcal{M}_n$ and $\alpha\in[0,1]$. Then
\begin{align*}
\omega(A^{\alpha}XA^{1-\alpha}+A^{1-\alpha}XA^{\alpha})\leq \omega(AX+XA)
 \end{align*}
\end{corollary}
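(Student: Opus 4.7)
The plan is to derive this corollary as a direct specialization of Theorem~\ref{a18}, with $f(t)=t^{\alpha}$ and $g(t)=t^{1-\alpha}$. To invoke that theorem I need to verify the two hypotheses on $f$ and $g$.

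First I would check the product condition: one has $f(t)g(t)=t^{\alpha}\cdot t^{1-\alpha}=t$ for all $t\geq 0$, so in fact $f(t)g(t)\leq t$ with equality, and this half of the hypothesis holds trivially. Next, I would verify that the quotient $(f/g)(t)=t^{2\alpha-1}$ is a Kwong function on $(0,\infty)$. Since $\alpha\in[0,1]$, the exponent $2\alpha-1$ lies in $[-1,1]$, and it is a classical fact (essentially due to Kwong) that $t\mapsto t^{\beta}$ is a Kwong function on $(0,\infty)$ whenever $\beta\in[-1,1]$; that is, the matrix $\bigl[(a_i^{\beta}+a_j^{\beta})/(a_i+a_j)\bigr]_{i,j=1}^{n}$ is positive semidefinite for any positive numbers $a_1,\ldots,a_n$. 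I would quote this standard fact (and, if needed, sketch it via the integral representation $t^{\beta}=c_{\beta}\int_{0}^{\infty}\frac{t}{t+\lambda}\lambda^{\beta-1}\,d\lambda$ for $\beta\in(0,1)$, combined with the Cauchy matrix $[1/(a_i+a_j)]$ being positive semidefinite to handle $\beta\in[-1,0]$ by symmetry/reciprocal).

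Once both hypotheses are in place, Theorem~\ref{a18} applies immediately and yields
\begin{align*}
\omega\bigl(A^{\alpha}XA^{1-\alpha}+A^{1-\alpha}XA^{\alpha}\bigr)=\omega\bigl(f(A)Xg(A)+g(A)Xf(A)\bigr)\leq \omega(AX+XA),
\end{align*}
which is exactly the desired Heinz-type numerical radius inequality.

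The only non-routine ingredient is the Kwong property of $t^{2\alpha-1}$; everything else is formal substitution. I do not expect any genuine obstacle, since this Kwong property is well documented in the literature; the main decision is whether to cite it or provide a short self-contained verification via the integral representation above.
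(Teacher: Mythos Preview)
Your approach is correct and matches the paper's: the corollary is derived exactly by substituting $f(t)=t^{\alpha}$ and $g(t)=t^{1-\alpha}$ into Theorem~\ref{a18}. The paper in fact gives no further detail beyond naming this substitution, so your explicit verification that $f(t)g(t)=t$ and that $t^{2\alpha-1}$ is a Kwong function on $(0,\infty)$ for $\alpha\in[0,1]$ is more than the paper itself supplies.
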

Kwong \cite{kwong} showed that the set Kwong functions on $(0,\infty)$ includes
all non-negative operator monotone functions $f$ on $(0,\infty)$.
\begin{example}
The function $f(t)=\log(t+1)$  is operator monotone on the interval $(0,\infty)$ \cite{abc}. If $g(t)={t\over f(t)}$, then,   by Theorem \ref{a18},  for every unitarily invariant norm $|||\,.\,|||$, $A\in\mathcal{P}_n$ and $X\in\mathcal{M}_n$ we have
\begin{align*}
\omega\left(\log(A+1)XA\log(A+1)^{-1}+A\log (A+1)^{-1}X\log(A+1)\right)\leq \omega(AX+XA).
 \end{align*}
\end{example}


\end{document}